\newtheorem{prop}{Proposition}[section]
\newtheorem{thm}[prop]{Theorem}
\newtheorem{lemma}[prop]{Lemma}
\theoremstyle{remark}
\newtheorem*{fact}{Fact}
\DeclareMathOperator{\acl}{acl}
\def\be{\begin{equation}}
\def\ee{\end{equation}}
\newcommand*{\set}[1]{\{#1\}}
\newcommand{\C}{\mathbb C}
\newcommand{\N}{\mathbb N}
\newcommand{\Q}{\mathbb Q}
\DeclareMathOperator{\td}{t.d.}
\newcommand{\x}{{\bar x}}
\newcommand{\y}{{\bar y}}
\newcommand{\z}{{\bar z}}
\newcommand{\cv}{{\bar c}}
\title[]{A weak version of the strong exponential closure}
\author{P. D'Aquino}
\address{Dipartimento di Matematica e Fisica,  Universit\`a della Campania "L. Vanvitelli", Viale Lincoln 5, 81100 Caserta, Italy}
\email{paola.daquino@unicampania.it}
\author{A. Fornasiero}
\address{Dipartimento di Matematica e Informatica 'Ulisse Dini', Universit\`{a} di Firenze, Viale Morgagni, 67/a, 50134 Firenze, Italy}
\email{antongiulio.fornasiero@gmail.com}
\author{G. Terzo}
\address{Dipartimento di Matematica e Fisica, Universit\`a della Campania "L. Vanvitelli", Viale Lincoln 5, 81100 Caserta, Italy}
\email{giuseppina.terzo@unicampania.it}
\date{\today}
\subjclass[2000]{03C60, 11D61, 11U09} \keywords{Exponential varieties, generic point, Schanuel's Conjecture}
\begin{document}

\maketitle

\begin{abstract}
\noindent
Assuming Schanuel's Conjecture we prove that for any irreducible variety
$V \subseteq \Bbb C^n  \times (\Bbb C^{*})^n$ over  $\Bbb Q^{alg}$,
of dimension $n$, and with dominant projections on both the first $n$ coordinates and the last $n$ coordinates,
there exists a generic point $(\overline a, e^{\overline a}) \in V$. We obtain in this way
many instances of the Strong Exponential Closure axiom introduced by Zilber 
 in   \cite{zilber}.
\end{abstract}

\section{Introduction}

In \cite{zilber1}  Zilber conjectured that the complex exponential field is quasi-minimal, i.e. every subset of $\mathbb C$ definable in the language of rings expanded by the exponential function is either countable or co-countable. If the conjecture is true the complex exponential field should have  good geometric properties.

He introduced and studied a class of new exponential fields now known as Zilber fields via axioms of algebraic and geometrical nature. There are many novelties in his analysis, including a reinterpretation
of Schanuel's Conjecture in terms of Hrushovski's very general theory of predimension
and strong extensions, \cite{udi}. Zilber proved that  his axioms are uncountably categorical, and all models are quasiminimal.

Zilber conjectured that the one in cardinal $2^{\aleph_0}$ is $\mathbb C$ as exponential field. 
Comparing the complex exponential field and Zilber fields has been object of study in  \cite{marker}, \cite{dmt}, \cite{dmt2}, \cite{Shkop}, \cite{dmt1}, \cite{ayan}, \cite{mantova}.

In this paper we will analyze  one of the axioms introduced by Zilber,  the {\bf Strong Exponential Closure} (SEC),  in the complex exponential field. Modulo Schanuel's Conjecture, (SEC) is the only axiom  still unknown for $(\mathbb C, exp)$. Some instances of (SEC) for $(\mathbb C, exp)$ have been proved in   \cite{marker}, \cite{mantova}, \cite{dft}. Here we obtain a more general result which includes those in \cite{dft}.

\medskip
Let $G_n(\C) = \C^n \times (\C^{*})^n$ be the algebraic group. 
 Let $1\leq k\leq n$ and $M=(m_{ij})$  a $k \times n$  matrix of integers and $$[M]:G_n(\mathbb C)
 \rightarrow G_k(\mathbb C)$$  be the homomorphism given by $$(x_1, \ldots, x_n, y_1, \ldots, y_n)  \rightarrow  (x'_1, \ldots, x'_k, y'_1, \ldots, y'_k) $$
where $$x'_i = m_{i1} x_1 + \ldots + m_{in} x_n \mbox{ and } y'_i = y_1^{m_{i1}} \cdot \ldots \cdot y_n^{m_{in}},$$

\noindent for $i = 1, \ldots, k.$
We recall that a variety $V$ is {\it rotund} if for every nonzero matrix $M\in \mathcal M_{k\times n}(\mathbb Z)$ then $\dim ([M](V))\geq rank (M)$, i.e. all the images of $V$ under suitable homomorphisms are of large dimension.

A variety $V$ is {\it free} if $V$ does not lie inside any subvariety of the form either $\{(\bar x,\bar y): r_1x_1+\ldots +r_nx_n=b \}$ where $ r_i\in \mathbb Z, r_i\mbox{ not all $0$, } b \in \mathbb C $ or $\{(\bar x,\bar y):y_1^{ r_1}\cdot \ldots \cdot y_n^{ r_n}=b\}$ where $ r_i\in \mathbb Z, r_i\mbox{ not all $0$, } b \in \mathbb C^*$.

\smallskip

\noindent {\bf Strong Exponential Closure.} If $V \subseteq \Bbb C^n  \times (\Bbb C^{*})^n$ is a rotund and free algebraic  variety of dimension $n$, and  $\bar a $ is a finite tuple of elements of $\mathbb C$ then there is $\bar z \in \C^n$ such that $(\bar z, e^{\bar z}) \in V$, and  is generic in $V$ over $\bar a $, i.e. $\td_{\Q(\bar a)}(\bar z, e^{\bar z} )= \dim(V)$.

\medskip
 
The hypotheses of rotundity and freeness on the variety $V$  guarantee that the only relations among the coordinates of points in $V$ are those coming from $V$ itself and the laws of exponentiation.

\smallskip

We recall 

\smallskip

{\bf Schanuel Conjecture} (SC) Let $z_1,\ldots, z_n\in \mathbb C$. Then $$t.d._{\mathbb Q}(z_1,\ldots, z_n, e^{z_1}, \ldots ,e^{z_n})\geq l.d.(z_1,\ldots, z_n).$$

In this paper  assuming Schanuel's Conjecture we prove the Strong Exponential Closure for $(\mathbb C, exp)$ for certain varieties defined over $\mathbb Q^{alg}$.  We denote the projections on the first $n$ coordinates and  on the last $n$ coordinates by  $\pi_1:V \to \C^n$ and $\pi_2:V \to (\Bbb C^{*})^n$, respectively.  

\medskip

{\bf Main Result.}
(SC) 
Let $V \subseteq \Bbb C^n  \times (\Bbb C^{*})^n$ be an irreducible  variety defined over the
algebraic closure of $\Bbb Q$, such that $\dim V =n$, and both projections  $\pi_1$ and $\pi_2$  are dominant. Then there is a Zariski dense set of generic points $(\bar z, e^{\bar z})$ in V.

\medskip
We recall that $\pi_1$ and $\pi_2$  being dominant  means that $\pi_1(V)$ and $\pi_2(V)$  are Zariski dense in $\C^n$.   As observed in  \cite{bayskirby}, $\pi_1$ being dominant implies that $V$ is rotund and both projections being dominant imply that $V$ is free. 

There are examples of  free and rotund varieties with projections  not dominant, e.g.  $\{(x_1,x_2,y_1,y_2): x_2=x_1^2   \mbox{ and } y_2=y_1+1\}$.  

\smallskip

In Lemma 2.10 in \cite{dft}  (see also \cite{BM}) the existence of a Zariski dense set of solutions of  $V$ is  proved under the hypothesis that $\pi_1$ is dominant. No appeal to Schanuel's conjecture is necessary, and moreover there is no restriction on the set of parameters. 
For the new result on the existence of generic  solutions Schanuel's conjecture is crucial and there are restrictions on the set of parameters defining the variety $V$.

Recently,  
Bays and Kirby  in \cite{bayskirby}  proved the quasi-minimality of $(\mathbb C, exp)$  assuming  a weaker condition than the strong exponential closure, requiring   only the existence of a point $(\bar z, e^{\bar z} ) $ in  $V$ under the same hypothesis on the variety. No appeal to Schanuel's Conjecture is made. 

Some instances of quasi-minimality are known, e.g. if $X$ is a subset of $\mathbb C$ defined by either quantifier-free formulas or by $\forall \overline y(P(x, \overline y) = 0)$ where $P$ is a term in the language $\{ +, \cdot, 0,1, exp \}$ then $X$ is either countable or co-countable. Boxall in  \cite{boxall}  extends this result to  sets defined by an existential formula, 
$\exists \overline y(P(x, \overline y) = 0),$ where $P$ is a term in the language $\{ +, \cdot, 0,1, exp \}$ together with parameters from $\Bbb C.$

\section{Preliminaries}
We recall that the definable subsets of $\C^n$ (in the language of rings)  in the sense of model 
theory coincide with the constructible sets in algebraic geometry. 
We briefly review  some basic facts about the notion of dimension associated to a definable set in $\mathbb C^n$ which will be used in the proof of the main theorem, for details see \cite{van-den-Dries} and \cite{fornasiero}. 

We will always allow a finite or a countable set of parameters $P$. If not necessary we will not specify the set of parameters $P$. 

Every definable (with parameters in $P$) set  $X$ has a dimension
\[
\dim(X)= \max\{d: \exists \x \in X \mbox{  t.d.}_P(\x) = d\}.
\]

Let $\overline{X}^{Zar}$ denote the Zariski closure of $X$.  Then $\dim(X)= \dim (\overline{X}^{Zar}).$

Moreover,  for algebraically closed fields 
the model-theoretic algebraic closure ($acl$) coincides with the usual
field-theoretic algebraic closure.

\begin{fact} 1.
$\dim(X)$ is well-defined,  i.e.  it does not depend on the choice of the
set $P$ of parameters in the definition of $X$. 
\end{fact}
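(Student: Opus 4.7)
The plan is to identify $\dim(X)$ with the Krull dimension of the Zariski closure $\cl{X}$ of $X$ in $\C^n$, a purely geometric invariant of $X \subseteq \C^n$ and hence visibly independent of the parameter set $P$. Since a first order formula only uses finitely many parameters from $P$, I may assume $P$ is finite; set $K := \Q(P)^{\mathrm{alg}}$, a countable algebraically closed subfield of $\C$.

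For the upper bound, fix $\bar x \in X$ and let $W$ be an irreducible component of $\cl{X}$ containing $\bar x$. The components of $\cl{X}$ are themselves definable over $K$ (after possibly replacing $K$ by a finite extension still algebraic over $\Q(P)$), and the coordinate ring of $W$ has Krull dimension $\dim W$, so $\td_K(\bar x) \leq \dim W \leq \dim \cl{X}$. Because $K/\Q(P)$ is algebraic, $\td_P(\bar x) = \td_K(\bar x)$, and the bound $\td_P(\bar x)\leq \dim\cl{X}$ follows for every $\bar x \in X$.

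For the lower bound, choose an irreducible component $W$ of $\cl{X}$ with $\dim W = \dim \cl{X}$. Since $W$ occurs in the Zariski closure of $X$, the set $X \cap W$ is Zariski dense in $W$; equivalently, $W \setminus X$ is contained in a proper closed subvariety $Y \subsetneq W$ with $\dim Y < \dim W$. It is therefore enough to produce a point $\bar x \in W$ with $\td_K(\bar x) = \dim W$: such an $\bar x$ cannot lie in the lower dimensional $Y$, so it lies in $X$, and $\td_P(\bar x) = \td_K(\bar x) = \dim \cl{X}$.

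The only nontrivial point is realising a generic point of $W$ inside $\C$ itself, and this is precisely the role of the saturation of $\C$ invoked just before the statement. Concretely, $K$ is countable while $\C$ has transcendence degree $2^{\aleph_0}$ over $\Q$, so there are abundantly many elements of $\C$ algebraically independent over $K$. A standard induction on $\dim W$, projecting $W$ onto a coordinate subspace of dimension $\dim W$ and taking successive fibres, then builds a point $\bar x \in W \subseteq \C^n$ with $\td_K(\bar x) = \dim W$. I expect the verification that components of a $P$-definable Zariski closed set remain definable over an algebraic extension of $\Q(P)$, and the fibre-by-fibre construction of a generic point in $\C$, to be the only genuinely technical ingredients; both are standard and are the reasons the authors relegate this to a \emph{Fact} referring back to \cite{van-den-Dries} and \cite{fornasiero}.
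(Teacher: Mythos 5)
Your argument is correct. The paper itself does not prove Fact 1 --- it is stated without proof, with a pointer to \cite{van-den-Dries} and \cite{fornasiero} --- but the sentence preceding the definition of $\dim(X)$ already recalls that the dimension of a definable set equals the dimension of its Zariski closure, and your proposal simply verifies that the transcendence-degree definition agrees with this geometric one, which is exactly the reconciliation the paper takes for granted. Two small tidyings: since $K=\Q(P)^{\mathrm{alg}}$ is already algebraically closed, the irreducible components of $\overline{X}^{\mathrm{Zar}}$ are automatically defined over $K$, so no finite extension is ever needed; and in passing from ``$X\cap W$ dense in $W$'' to ``$W\setminus X$ lies in a proper closed $Y\subsetneq W$'' you are tacitly using that $X$ is constructible (a definable set in $\mathrm{ACF}_0$), since for an arbitrary dense subset that implication fails --- worth making explicit, even though it is standard.
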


\begin{fact} 2.
Let $X$ be a definable set in $\C^n$.  The dimension of $X$ is $0$ iff $X$ is finite and nonempty.  We use the convention that the empty set has dimension $-1$.
\end{fact}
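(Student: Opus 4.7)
My plan is to exploit the preliminary fact already recorded just above the statement, namely that the dimension of any definable set $X \subseteq \C^n$ equals the dimension of its Zariski closure $\overline{X}$. This reduces Fact~2 to the classical algebraic-geometric statement that a Zariski closed subset of $\C^n$ has Krull dimension $0$ if and only if it is a nonempty finite set of points. The argument then splits cleanly into the two implications, with no substantial obstacle.

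For the forward direction I would argue: if $\dim(X)=0$, the set $\{\td_P(\bar x):\bar x\in X\}$ is nonempty with maximum equal to $0$, so $X$ itself is nonempty; passing to its Zariski closure $\overline{X}$, which is now of algebraic dimension $0$ in $\C^n$, one concludes that $\overline{X}$ is a finite union of points (its coordinate ring is an Artinian $\C$-algebra), and hence $X \subseteq \overline{X}$ is itself finite. For the converse, if $X=\{\bar a_1,\dots,\bar a_k\}$ is finite and nonempty, then $X$ is already Zariski closed and visibly of algebraic dimension $0$, so the preliminary fact yields $\dim(X)=0$.

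If one prefers to bypass the comparison with the Zariski closure and work directly from the displayed definition, the forward direction can instead be obtained via the $\aleph_1$-saturation of $\C$ that is flagged in the preliminaries. Indeed, dimension $0$ forces every $\bar x\in X$ to lie in $\acl(P)^n=\overline{P}^{\,n}$, which is countable since $P$ is countable; if $X$ were infinite, the partial type consisting of the defining formula of $X$ together with the scheme $\bar x\ne\bar a$ for every $\bar a\in X$ would be finitely satisfiable over a countable set of parameters and thus realized in $\C$, contradicting the fact that the realization ought simultaneously to lie in and to avoid $X$. The only mildly delicate point of the proof is precisely this step of ruling out a countably infinite definable set, which explains why $\aleph_1$-saturation of $\C$ is singled out among the preliminaries just before Fact~2 is stated.
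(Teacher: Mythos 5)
Your proof is correct. There is, however, nothing in the paper to compare it against: Fact~2 is recorded in the preliminaries as a standard fact with no proof, the reader being referred to \cite{van-den-Dries} and \cite{fornasiero}, so you are supplying an argument where the authors supply none. Both of your routes are sound. The first one (pass to the Zariski closure and use that a zero-dimensional Zariski closed subset of $\C^n$ is a finite nonempty set of points) leans on the immediately preceding preliminary that $\dim(X)$ equals the dimension of the Zariski closure of $X$, which the paper also states without proof; so strictly you are trading one unproved preliminary for another, though that is consistent with how the section is written. The second route is self-contained relative to the displayed definition of dimension and is the more informative one: $\dim(X)=0$ forces $\td_P(\x)=0$ for every $\x\in X$, hence $X\subseteq\acl(P)^n$, which is countable since $P$ is; and your omitting-type argument is correctly set up --- the type $\{\phi(\x)\}\cup\{\x\neq\av:\av\in X\}$ has countably many parameters, is finitely satisfiable exactly because $X$ is assumed infinite, and a realization would lie in $X$ while differing from every element of $X$. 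The converse direction is immediate in either version, since a finite definable set is contained in $\acl(P)^n$, so all its points have transcendence degree $0$ over $P$. The only cosmetic caveat is that your closing speculation about why saturation ``is singled out'' in the preliminaries is editorial rather than mathematical; the saturation of $\C$ is in fact used again later (e.g.\ in the proof of Theorem~\ref{mainresult}), so it is not there solely for Fact~2.
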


\noindent 
{\bf Notation.} Let $Y \subseteq \C^{n+m}$.  For  every $\x \in \C^n$ 
 $Y_\x=\{ \y \in \C^m: (\x,\y)\in Y\}$ and $Y^\y=\{ \x \in \C^n: (\x,\y)\in Y\}$. 

If $Y\subseteq  \C^{n+m}$ is definable then $(Y_{\x})_{\x \in \C^n}$ is a definable family. 

\begin{fact}  3.
Let $(Y_{\x})_{\x \in \C^n}$ be a definable family of subsets of $\C^m$. For every
$d \in \N$, the set $\set {\x \in \C^n:\dim(Y_{\x}) = d}$ is definable, with the same
parameters as $(Y_{\x})_{\x \in \C^n}$. For $d=0$ this gives that  $\{\x \in \C^n: Y_{\x} \text{ is finite}\}$ is also definable.

\end{fact}

\begin{fact}  4.
Let $(Y_{\x})_{\x \in \C^n}$ be a definable family of subsets of $\C^m$. Then  the family $({\overline Y_{\x}}^{Zar})_{\x \in \C^n}$ of the Zariski closures is still a definable family.  
\end{fact}

Let $\pi_1: Y \to \C^n$ and $\pi_2: Y \to \C^m$ be the projections on the first $n$ and the  last $m$ coordinates, respectively.

\begin{lemma}
\label{Fubini} 
Let $Y \subseteq \C^{n+m}$ be definable over $P$, and
$X := \pi_1(Y)$.
Assume that, for every $\x \in X$, $\dim(Y_\x) = d$.
Then, $\dim(Y) = \dim(X) + d$.
In particular, if $Y_\x$ is infinite for every $\x \in X$, then
$\dim(Y) > \dim(X)$.\end{lemma}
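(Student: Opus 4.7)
The plan is to establish the equality by proving the two inequalities $\dim(Y)\le \dim(X)+d$ and $\dim(Y)\ge \dim(X)+d$ separately. Both rely on the characterization $\dim(Z)=\max\{\td_P(\z):\z\in Z\}$ given in the excerpt, combined with the standard additivity of transcendence degree, namely $\td_P(\x,\y)=\td_P(\x)+\td_{P(\x)}(\y)$ for every $(\x,\y)\in\C^{n+m}$.

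For the upper bound, I would pick an arbitrary point $(\x,\y)\in Y$. Since $\x\in X=\pi_1(Y)$ we have $\td_P(\x)\le \dim(X)$. The fiber $Y_\x\subseteq \C^m$ is definable over $P\cup\{\x\}$, and by Fact 1 its dimension is independent of the parameter set, so the hypothesis forces $\td_{P(\x)}(\y)\le \dim(Y_\x)=d$. Summing these two bounds and maximizing over $(\x,\y)\in Y$ yields $\dim(Y)\le \dim(X)+d$.

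For the lower bound I would first select $\x_0\in X$ realizing $\td_P(\x_0)=\dim(X)$; the saturation of $\C$ emphasised in the excerpt guarantees such a witness inside $X$. By hypothesis $\dim(Y_{\x_0})=d$, and applying saturation once more to the definable set $Y_{\x_0}$ over the enlarged (still finite/countable) parameter set $P\cup\{\x_0\}$ furnishes $\y_0\in Y_{\x_0}$ with $\td_{P(\x_0)}(\y_0)=d$. Additivity of transcendence degree then gives $\td_P(\x_0,\y_0)=\dim(X)+d$, so $\dim(Y)\ge \dim(X)+d$. For the ``in particular'' clause I would stratify: when every fiber is infinite, Fact 3 makes the sets $X_e:=\{\x\in X:\dim(Y_\x)=e\}$ definable over $P$, we have $X=\bigsqcup_{e\ge 1}X_e$, and choosing $e_0\ge 1$ with $\dim(X_{e_0})=\dim(X)$ and applying the main statement to $Y\cap (X_{e_0}\times \C^m)$ gives $\dim(Y)\ge \dim(X_{e_0})+e_0>\dim(X)$.

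The only mildly delicate step is securing the two witnesses of maximal transcendence degree, where saturation of $\C$ relative to $P$ really does the work; once those witnesses are in hand, the argument collapses to one application of the transcendence-degree additivity formula in each direction.
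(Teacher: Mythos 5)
Your proof is correct. The two displayed inequalities are exactly the standard argument that the paper's characterization $\dim(Z)=\max\{\td_P(\z):\z\in Z\}$ is designed to support: the upper bound via additivity of transcendence degree together with Fact~1 (invariance of dimension under enlargement of the parameter set by $\x$), and the lower bound by realizing maximal transcendence degree first in $X$ and then in the fiber $Y_{\x_0}$ over the enlarged parameters. You also correctly notice that the ``in particular'' clause is not literally a specialization of the main equality, since it drops the requirement that all fibers have the same dimension, and your stratification via Fact~3 into the finitely many pieces $X_e$ (finitely many because $e\le m$) handles this cleanly.

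For the comparison you asked for: the paper does not supply its own proof of this lemma at all; it simply refers the reader to van den Dries \cite{van-den-Dries}. So there is no internal argument to measure yours against, but the route you take is precisely the one the preliminaries of the paper set up (the transcendence-degree characterization of dimension, Fact~1, Fact~3, and saturation), which is as close to ``the paper's approach'' as one can get here. One small remark: saturation is not really needed to obtain the witnesses $\x_0$ and $\y_0$, since the definition of $\dim$ as a maximum already guarantees they exist; saturation is what underlies Fact~1 and Fact~3 rather than the witness-extraction step.
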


Notice that an equivalent  result holds in the case of $X:=\pi_2(Y)$. For the proof of the above lemma see \cite{van-den-Dries}.

Simple calculations give the following result.
\begin{lemma}
\label{fibre}
Let $Y \subseteq \C^n \times \C^m$ be a  definable set over $P$, such that $\dim(Y) \le n$.
Let $\cv \in \C^n$ be generic over $P$, i.e. $t.d._P(\cv)=n$.
Then, the fiber $Y_\cv := \set{\z \in \C^m: (\cv, \z) \in Y}$ is finite.
\end{lemma}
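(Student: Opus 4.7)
The plan is to consider the ``bad'' set of base points, i.e.\ the set $X := \{\x \in \C^n : Y_\x \text{ is infinite}\}$, and show that a generic $\cv$ cannot lie in $X$. Fact 3 says that $X$ is definable over the same parameters $P$ as $Y$, since $X = \{\x \in \C^n : \dim(Y_\x) \ge 1\}$ and the locus where a definable family has a given dimension is itself definable.

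Assume for contradiction that $\cv \in X$. Since $\cv$ is generic over $P$ in $\C^n$ and $X$ is definable over $P$, by the remark immediately preceding Lemma \ref{fibre} this would force $\dim(X) = n$. Now I would set $Y' := Y \cap (X \times \C^m)$, which is definable over $P$, contained in $Y$ (so $\dim(Y') \le \dim(Y) \le n$), and satisfies $\pi_1(Y') = X$ with every fiber $Y'_\x = Y_\x$ infinite by definition of $X$.

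The ``in particular'' clause of Lemma \ref{Fubini} then yields $\dim(Y') > \dim(X) = n$, contradicting $\dim(Y') \le n$. Hence $\cv \notin X$, i.e.\ $Y_\cv$ is finite, as required.

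I do not expect any serious obstacle: the whole argument is a one-line application of the Fubini-type inequality to the sub-variety of $Y$ sitting over the infinite-fiber locus. The only point that needs a moment of care is checking that the set $X$ is definable over the same parameter set $P$ (so that $\cv$ generic over $P$ really forces $\dim(X) = n$ in case $\cv \in X$), but this is exactly what Fact 3 provides.
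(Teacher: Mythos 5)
Your proof is correct and follows the paper's argument essentially line for line: you form the same infinite-fiber locus $X$, note via Fact 3 that it is definable over $P$, and apply the ``in particular'' clause of Lemma \ref{Fubini} to the preimage of $X$ inside $Y$ (your $Y'$ is exactly the paper's $W = \pi_1^{-1}(X)$) to get the same contradiction $n \geq \dim(Y') > \dim(X) = n$. The only cosmetic difference is that you phrase the contradiction as ``assume $\cv \in X$, deduce $\dim(X)=n$'' whereas the paper first shows $\dim(X) < n$ and then concludes; these are logically identical.
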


Brownawell and Masser in \cite{BM} develop a very powerful criterion for solvability of systems of exponential equations.  Proposition 2 in \cite{BM} implies the following result (see also \cite{dft}).

\begin{thm}
\label{esistenzasolvarieta}
Let $W \subseteq G_n(\C)$ be an irreducible algebraic variety such that $\pi_1(W)$ is Zariski dense in $\C^n$. Then, the set $\set{\overline a \in \C^n: (\overline a, e^{\overline a}) \in W}$ is Zariski dense in $\C^n$. 

\end{thm}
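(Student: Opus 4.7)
The plan is to reduce to Lemma \ref{lem:system-poly} in two stages: first cut $W$ down to an irreducible subvariety of dimension exactly $n$ on which $\pi_1$ remains dominant, and then exhibit the cut-down variety as an irreducible component of a system $\{p_i(\bar x, y_i) = 0\}_{i=1}^n$ of the special form required by the lemma.

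\emph{Dimension reduction.} Set $k := \dim W - n$; dominance of $\pi_1$ forces $k \geq 0$. If $k > 0$, slice $W$ by a generic affine hyperplane of the form $H = \{\lambda_1 y_1 + \dotsb + \lambda_n y_n = \mu\}$ involving only the $\bar y$-coordinates. For generic $\bar x \in \pi_1(W)$, the $k$-dimensional fiber $\pi_1^{-1}(\bar x) \cap W \subseteq (\C^*)^n$ meets $H$ in a variety of dimension $k-1$; hence $\pi_1$ is still dominant on $W \cap H$, and by Lemma \ref{Fubini} we have $\dim(W \cap H) = n + k - 1$. At least one irreducible component of $W \cap H$ has Zariski-dense image under $\pi_1$. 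Replacing $W$ by such a component and iterating $k$ times yields an irreducible $W' \subseteq W \subseteq G_n(\C)$ with $\dim W' = n$ and $\pi_1(W')$ Zariski dense in $\C^n$. Since any $\bar a$ with $(\bar a, e^{\bar a}) \in W'$ automatically satisfies $(\bar a, e^{\bar a}) \in W$, we may assume $\dim W = n$.

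\emph{Producing the polynomials.} With $\dim W = n$ and $\pi_1$ dominant, $\C(W)$ is algebraic over $\C(\bar x)$, so each coordinate $y_i$ is algebraic over $\C(\bar x)$ on $W$. Let $p_i(\bar x, u) \in \C[\bar x, u]$ be its minimal polynomial, taken primitive and cleared of denominators. By Gauss's lemma $p_i$ is irreducible in $\C[\bar x, u]$; it has degree $\geq 1$ in $u$; and since $y_i$ does not vanish on $W$, it is not a scalar multiple of $u$. Let $V$ be the irreducible component of $\{(\bar x, \bar y) \in G_n(\C) : p_i(\bar x, y_i) = 0, \ i = 1, \dotsc, n\}$ containing $W$. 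The generic fiber of $V \to \C^n$ is finite (it is contained in $\prod_i \{u : p_i(\bar x, u) = 0\}$), so by Lemma \ref{Fubini} $\dim V \leq n$; together with $W \subseteq V$, $\dim W = n$, and irreducibility of both, this forces $V = W$. Lemma \ref{lem:system-poly} applied to $V = W$ then yields a Zariski-dense set of $\bar a \in \C^n$ with $(\bar a, e^{\bar a}) \in W$.

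\emph{Main obstacle.} The delicate point is the first step: iteratively slicing must preserve both dominance of $\pi_1$ and the possibility of extracting an irreducible component of the expected dimension. The bookkeeping is controlled by Lemma \ref{Fubini}, but one must be careful that the generic $\pi_1$-fiber does not collapse unexpectedly, which is precisely why the slicing is done by hyperplanes whose equation only involves the $\bar y$-coordinates (so $\pi_1$ surjects on each fiber of the slicing) with a generic constant term (to avoid containing the fiber wholesale).
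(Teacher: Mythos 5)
Your argument is correct and lands in the right place (Lemma \ref{lem:system-poly}), but it uses a genuinely different mechanism to cut $W$ down to dimension $n$ than the paper does. You slice $W$ by $k = \dim W - n$ generic affine hyperplanes in the $\bar y$-variables, track dominance of $\pi_1$ through the slicing, and then take minimal polynomials of the $y_i$ over $\C(\bar x)$ on the resulting $n$-dimensional $W'$; the identification $V = W'$ then follows from the dimension count. The paper instead never slices: it takes a point $\bar a$ generic in $\pi_1(W)$, lifts it to $(\bar a, \bar b) \in W$ with each $b_i \in \acl(K(\bar a))$ (using that $W_{\bar a}$ is a nonempty variety defined over $K(\bar a)$), writes down polynomials $p_i$ with $p_i(\bar a, b_i)=0$, forms the component $V$ of the system through $(\bar a, \bar b)$, and extracts an $n$-dimensional irreducible piece $U_0$ from the finite-fiber locus of $V$; one checks $U_0 \subseteq W$ because $(\bar a, \bar b)$ is generic in $U_0$ and lies in $W$. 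The trade-off is roughly this: the paper's generic-point lift avoids the transversality bookkeeping entirely, at the cost of a somewhat implicit choice of $p_i$ (for Lemma \ref{lem:system-poly} to apply to $U_0$ one really wants $\dim V = n$, which the minimal-polynomial choice guarantees); your route makes the polynomials canonical from the start but must justify that a single generic $(\lambda,\mu)$ keeps $\pi_1$ dominant and drops the dimension by exactly one at each step — the point you flag as the ``main obstacle.'' That claim is true (it follows from upper semicontinuity of fiber dimension and the fact that a positive-dimensional affine variety meets all but a proper closed family of affine hyperplanes), but it is precisely the work that the paper's generic-point construction sidesteps.
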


The hypothesis that $\pi_1(W)$ is Zariski dense is a non-trivial condition, and it implies that the variety is rotund. Theorem \ref{esistenzasolvarieta} states an even stronger property than the  Exponential-Algebraic Closedness for $(\mathbb C, exp)$ for  irreducible variety  $W$ with $\pi_1$ dominant.  Indeed,  there is a Zariski-dense sets of points $(\overline a, e^{\overline a})$ in $W$. 
A major problem is to replace the hypothesis that $\pi_1$ is dominant with much weaker ones like rotundity and freeness while still retaining the conclusion of the theorem.

Notice that no restriction is made on the coefficients of the polynomials defining $W$, and the result is independent from Schanuel's Conjecture.

\section{Strong Exponential Closure}
We now go back to analyze  Zilber's original axiom (SEC), i.e. we want to prove the existence of a  point in the variety $V$ of the form $(\overline a, e^{\overline a})$ which is generic in $V$. Assuming Schanuel's Conjecture we can prove (SEC) for algebraic varieties satisfying certain conditions. 

\begin{thm}
\label{mainresult}
(SC) 
Let $V \subseteq \Bbb C^n  \times (\Bbb C^{*})^n$ be an irreducible variety over the
algebraic closure of $\Bbb Q$ with $\dim V =n$. Assume that  both projections  $\pi_1$ and $\pi_2$ are  dominant. Then there is $\overline a\in \mathbb C^n$ such that $(\overline a, e^{\overline a})\in V$ and  $t.d._{\Bbb Q}(\overline a, e^{\overline a})=n$. In fact, the set $$\{ \overline a\in \mathbb C^n: (\overline a, e^{\overline a})\in V \mbox{ and  } t.d._{\Bbb Q}(\overline a, e^{\overline a})=n\}$$
 is  Zariski dense.  
\end{thm}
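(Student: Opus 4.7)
The plan is to combine Theorem~\ref{esistenzasolvarieta} with Schanuel's Conjecture. First, I would record the reduction coming from SC: since $V$ is defined over $\overline{\Q}$ and $\dim V = n$, any $\overline{a}$ with $(\overline{a}, e^{\overline{a}}) \in V$ satisfies $\td_{\Q}(\overline{a}, e^{\overline{a}}) \le n$; and Schanuel's Conjecture gives $\td_{\Q}(\overline{a}, e^{\overline{a}}) \ge l.d.(\overline{a})$. Hence a solution $\overline{a}$ whose coordinates are $\Q$-linearly independent automatically satisfies $\td_{\Q}(\overline{a}, e^{\overline{a}}) = n$, so it is enough to produce a single such solution.

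By Theorem~\ref{esistenzasolvarieta} the set $S := \{\overline{a} \in \C^n : (\overline{a}, e^{\overline{a}}) \in V\}$ is Zariski dense in $\C^n$. Moreover, freeness of $V$ --- a consequence of both $\pi_1$ and $\pi_2$ being dominant --- guarantees that for every nonzero $\overline{m} \in \Z^n$, writing $L_{\overline{m}} := \{\overline{x} \in \C^n : \sum_i m_i x_i = 0\}$, the intersection $V \cap (L_{\overline{m}} \times (\C^*)^n)$ is a proper subvariety of $V$ and hence has dimension at most $n-1$. So the set of ``bad'' solutions (those with $\sum_i m_i a_i = 0$) is contained in $L_{\overline{m}}$, a proper closed subset of $\C^n$.

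The heart of the proof is then to produce a solution in $S$ lying outside every $L_{\overline{m}}$ with $\overline{m} \ne 0$. Mere Zariski density of $S$ does not suffice, because the countable union $\bigcup_{\overline{m} \ne 0} L_{\overline{m}}$ may well contain a Zariski-dense set. I would therefore revisit the construction behind Theorem~\ref{esistenzasolvarieta}, which passes through Lemma~\ref{lem:system-poly} and Theorem~\ref{algebraic}: solutions there are obtained by a Rouche-type argument near large lattice points $2\pi i\, \overline{n}$ inside a suitable cone $U$, yielding $\overline{a}^{(\overline{n})} = 2\pi i\, \overline{n} + \overline{\epsilon}^{(\overline{n})}$ with analytic corrections $\overline{\epsilon}^{(\overline{n})}$ governed by the defining algebraic functions $f_i$. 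A $\Q$-linear relation $\sum_i m_i a_i^{(\overline{n})} = 0$ forces the pair of constraints $\sum_i m_i n_i = 0$ and $\sum_i m_i \epsilon_i^{(\overline{n})} = 0$; the goal is to exhibit an $\overline{n}$ in the Zariski-dense set $U \cap (2\pi i \Z^*)^n$ for which no such pair of constraints is simultaneously satisfied for any $\overline{m} \ne 0$.

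The main obstacle is the rigorous execution of this last step: controlling which lattice points $\overline{n}$ give rise to a solution whose coordinates admit some $\Q$-linear dependence, and showing that the countable family of such bad points cannot exhaust the Zariski-dense set of admissible lattice points in the cone. This is where Schanuel's Conjecture is indispensable, since it provides the translation between transcendence degree, which is what the statement requires, and $\Q$-linear dimension, which is the combinatorial quantity that a lattice-based construction has a chance to control directly.
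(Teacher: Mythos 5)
Your reduction via Schanuel's Conjecture is correct and matches the paper's first step: since $V$ is over $\overline{\Q}$ with $\dim V = n$, any exponential point on $V$ has $\td_{\Q}(\overline a, e^{\overline a}) \le n$, and by SC it suffices to produce a solution with $\Q$-linearly independent coordinates. You also correctly observe that mere Zariski density of the solution set from Theorem~\ref{esistenzasolvarieta} cannot rule out that every solution lies on some hyperplane $L_{\overline m}$, because a countable union of proper closed sets can be Zariski dense. But at that point your proposal stops being a proof: the step you flag as ``the main obstacle'' --- showing that the lattice points $\overline n$ in the cone which spawn a $\Q$-linearly dependent solution cannot exhaust all admissible $\overline n$ --- is exactly the content of the theorem, and you offer no mechanism for it. Tracking which $\overline n$ are ``bad'' by peering into the Rouch\'e-type construction is not obviously tractable, because the corrections $\overline\epsilon^{(\overline n)}$ are governed by algebraic functions that vary with $\overline n$, and there is no a priori reason the set of bad $\overline n$ should be small; in fact, without an extra input one cannot even show it avoids a single lattice point.

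The paper closes this gap with a different idea, which is absent from your proposal. Given a hypothetical non-generic solution $(\overline a, e^{\overline a})$, it uses Lemma~\ref{Fubini} and dominance of both projections to reduce to the case where both fibers are finite, so that $\td_\Q(\overline a) = \td_\Q(e^{\overline a}) = m < n$ and, by SC, $l.d.(\overline a) = m$; this yields a hyperplane $L_M$ with $\overline a$ generic in it and the corresponding algebraic torus $T_M$ with $e^{\overline a}$ generic in it. It then packages these tori into a \emph{definable family} over $\overline{\Q}$ (Zariski closures of $\pi_2$ of components of certain definable sets $W_N$), notes that the subfamily of members that are tori is both countable and definable, and concludes by saturation of $\C$ that it is \emph{finite}. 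Having only finitely many candidate tori $H_1, \dotsc, H_l$ to avoid, one can modify the Brownawell--Masser system so that its solutions miss $\bigcup_i H_i$, and an application of Theorem~\ref{algebraic} then produces a generic point. This finiteness-via-saturation step is the missing ingredient in your plan, and without something in its place the proposal does not go through.
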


In the proof of Theorem \ref{mainresult} we will use the following known result. 

\

Let $M \in  \mathcal M_{m\times n}(\Bbb Z)$,   $L_M = \{\overline x \in \mathbb C^n: M \cdot \overline x = \overline 0\}$,  and  $T_M = \{\overline y\in (\mathbb C^*)^n : {\overline y}^M =\overline 1 \}.$ By ${\overline y}^M $ we denote the result of the exponential map applied to $M \cdot \overline x$, where $y_i=e^{x_i}$ for $i=1,\dots , n$.
\begin{fact}
 The  hyperspace  $L_M$ and the algebraic subgroup $T_M $ have the same dimension.
\end{fact}

\begin{proof}
Let $Z_M=
\{ \overline x \in \mathbb C^n : e^{M \cdot \overline x} = \overline 1 \} = \{ \overline x \in \mathbb C^n : e^{ \overline x} \in T_M\}=  \{ \overline x \in \mathbb C^n : M \cdot \overline x\in2\pi i\mathbb Z^m\}$.
 The algebraic subgroup $T_M$ is a closed differential submanifold in $(\mathbb C^*)^n$, and since $\exp $ is a local diffeomorphism  $Z_M$ is a differential submanifold of $\mathbb C^n$ of the same dimension as $T_M$.  Notice that $L_M$ is the tangent space of $Z_M$ at $\overline 0$, 
 and so $\dim (L_M)=\dim (Z_M)=\dim (T_M)$. 
\end{proof}

\noindent  {\it Proof of Theorem 3.1}. 
Let $U=\{ (\overline x,\overline y)\in V: |V_{\overline x}| \mbox{ and } |V^{\overline y}| \mbox{ are finite} \}$. Clearly, $U$ is  definable and Zariski dense in $V$. 
Let  $(\overline a, e^{\overline a}) \in U$, and suppose that $(\overline a, e^{\overline a})$ is not generic in $U$, i.e. $t.d._{\Bbb Q} (\overline a, e^{\overline a}) = m < n$.  The finite cardinality of $V_{\overline a}$ implies that all coordinates of the tuple $e^{\overline a}$ are  algebraic over ${\overline a}$, since they are in $\acl ({\overline a})$.  Exchanging ${\overline a}$ and $e^{\overline a}$ we have that each coordinate of the tuple ${\overline a}$ is algebraic over $e^{\overline a}$.  
 Hence, 
 \begin{equation}
 \label{tr.d.}
 m =  t.d._{\Bbb Q} (\overline a) =  t.d._{\Bbb Q} (\overline a, e^{\overline a}) =  t.d._{\Bbb Q} (e^{\overline a}).
 \end{equation}

Schanuel's Conjecture  implies $l.d.(\overline a) \leq t.d._{\Bbb Q} (\overline a, e^{\overline a}) = m < n$. By equation (\ref{tr.d.}) we can then conclude that  $l.d.(\overline a) =m$. Hence,  there exists a matrix $M \in \mathcal M_{(n-m)\times n}(\Bbb Z)$ of rank $n-m$ 
such that $M \cdot \overline a = \overline0$, which together with  its multiplicative version give the following hyperspace and torus:
$$L_M = \{\overline x : M \cdot \overline x = \overline 0\} \mbox{ and } T_M = \{\overline y : {\overline y}^M = \overline 1 \}.$$  As observed $\dim T_M = \dim L_M=m$.  So, $\overline a$ is generic in  $L_M$ and $e^{\overline a}$ is generic in $T_M.$ Then the non genericity of $ (\overline a, e^{\overline a}) $ in $U$ is witnessed either  by $\overline a$  or $e^{\overline a}$.

Without loss of generality we can assume that $T_M$ is irreducible. If not,  we consider the irreducible component  of $T_M$ containing $\overline{1}$ whose associated matrix we call $M'$.  By results on pages 82-83 in \cite{bombieri} the associate hyperspace $L_{M'}$ coincides with $L_M$.

For every $N \in \mathcal M_{(n-m)\times n}(\Bbb C),$ define
$$W_N = \{(\overline x, \overline y) \in U : \overline x \in L_N  \}.$$
Clearly, $W_N$ is definable, and so  $(W_N)_N$ is a definable family. 

If $N= M$ then $(\overline a, e^{\overline a}) \in W_M,$ and so $\dim W_M \geq \dim L_M.$  Moreover,  from the definitions of $U$ and $W_N$ it follows that   $\pi_1 $ restricted to  $W_M $ is finite-to-one. Therefore,   $\dim W_M = \dim \pi_1(W_M)$, and so   $\dim W_M \leq \dim L_M.$  Hence, $\dim W_M = \dim L_M.$

Let  $W'_M$  be the irreducible component of the Zariski closure of $W_M$ containing the point $(\overline a, e^{\overline a}).$  Since  $(\overline a, e^{\overline a})$ is generic  in $W'_M$, and $e^{\overline a} \in \pi_2(W'_M)\cap T_M$ is generic in  $\pi_2(W'_M)$  we have that   
$\pi_2(W'_M) \subseteq T_M.$  Hence,  the Zariski closure of the projection, ${\overline{\pi_2(W'_M)}}^{Zar},$ is contained in $T_M.$ Moreover, $e^{\overline a}$ is generic in  $T_M$, and this implies that  $T_M =  {\overline{\pi_2(W'_M)}}^{Zar}.$\\  
Let $(W_i^*)_{i\in I}$ (where $I$ is a definable set) be the definable family of all irreducible components of all ${(W_N)}_N$ for $N \in \mathcal M_{(n-m)\times n}(\Bbb C)$.  In particular, $W'_M$ is one of $W_i^*$  for some $i\in I$.  For each $i\in I$,  denote $S_i$  the Zariski closure of $\pi_2(W_i^*)$. 

Let
$$\mathcal U_m  = \{ S_i : S_i \mbox{ is a subgroup of  } (\mathbb C^*)^n \}.$$
Since $\mathcal U_m$ is a  countable definable family in $(\Bbb C^{*})^n$, and $\Bbb C$ is saturated then   $\mathcal U_m$ is either finite or uncountable.  Then $\mathcal U_m$ is necessarily  finite, and $T_M\in \mathcal U_m$. 

Let  $\mathcal U:= \mathcal U_1 \cup \ldots \cup \mathcal U_{n-1}$. Clearly, $\mathcal U$ is finite since each $\mathcal U_j$ is finite for $j\in \{1, \ldots , n-1\}$, so $\mathcal U=\{ H_1, \ldots, H_{\ell}\}.$
Let $T=H_1\cup \ldots \cup H_{\ell}$, and $C=\{  (\overline x,\overline y)\in U: \overline y\in T\}$. 
Then by Theorem \ref{esistenzasolvarieta}
the set  $X=\{(\overline a, e^{\overline a}) : (\overline a, e^{\overline a})\in U-C\}$ is not empty, Zariski dense in $U$ (and hence in $V$), and $t.d._{\Bbb Q} (\overline a, e^{\overline a}) = n$ for every $(\overline a, e^{\overline a})\in X$.  

\hfill $\Box$

\end{document}